\newcommand{\set}[1]{\left\{#1\right\}}
\newcommand{\setdef}[2][x]{\left \{#1\,|\,#2 \right \}}
\newcommand{\romb}{\diamondsuit}
\newcommand{\Log}{\mathop{Log}}
\newcommand{\logic}[1]{\mathsf{#1}}
\newcommand{\X}{\mathfrak X}
\newcommand{\pmor}{\twoheadrightarrow}
\newcommand{\Y}{\mathcal Y}
\newcommand{\ve}[1]{\overrightarrow{#1}}
\newcommand{\Nf}{\mathcal{N_\omega}}
\newcommand{\N}{\mathcal{N}}
\newtheorem{theorem}{Theorem}[section]
\newtheorem{proposition}[theorem]{Proposition}
\newtheorem{corollary}[theorem]{Corollary}
\newtheorem{lemma}[theorem]{Lemma}
\theoremstyle{definition}
\newtheorem{definition}[theorem]{Definition}
\theoremstyle{remark}
\begin{document}

  \title{Modal logic of some products of neighborhood frames}

  \author{Andrey Kudinov\\ \small
kudinov\ [at here]\ iitp\ [dot]\ ru \\
\small
Institute for Information Transmission Problems, Russian Academy of
Sciences\\ \small
National Research University Higher School of Economics, Moscow, Russia\\ \small
Moscow Institute of Physics and Technology}

 \maketitle

   \begin{abstract}
  We consider modal logics of products of neighborhood frames and prove that for any pair $\logic{L}$ and $\logic{L'}$  of logics from set $\set{\logic{S4}, \logic{D4}, \logic{D}, \logic{T}}$
modal logic of products of $\logic{L}$-neighborhood frames and $\logic{L'}$-neighborhood
frames is the fusion of $\logic{L}$ and $\logic{L'}$. 
  \end{abstract}
 
\section{Introduction}
Neighborhood frames as a generalization of Kripke semantics for modal logic were invented independently by Dana Scott \cite{Scott1970-SCOAOM-2} and Richard Montague \cite{montague1970}.
Neighborhood semantics is more general than Kripke semantics and in case of normal reflexive and transitive logics coincide with topological semantics. 
In this paper we consider product of neighborhood frames, which was introduced by Sano in \cite{Sano11:AxiomHybriProduMonotNeighFrame}. It is a generalization of product of topological spaces\footnote{``Product of topological spaces'' is a well-known notion in Topology but it is different from what we use here (for details see \cite{benthem:MultimodalLogicsProductsTopologies})} presented in \cite{benthem:MultimodalLogicsProductsTopologies}.

The product of neighborhood frames is defined in the vein of the product of Kripke frames (see \cite{Sege73:Twodimodallogic} and \cite{Sheh78:Twodimodallogic}). But, there are some differences.
In any product of Kripke frames axioms of commutativity and Church-Rosser property are valid. Nonetheless,
as it was shown in \cite{benthem:MultimodalLogicsProductsTopologies}, the logic of the products of all topological spaces is the fusion of logics $\logic{S4} \otimes \logic{S4}$.

In his recent work \cite{Urid11:ModalLogicBitopRatioPlane} Uridia  considers derivational semantics for products of topological spaces.
He proves that the logic of all topological spaces is the fusion of logics $\logic{D4} \otimes \logic{D4}$. 
And in fact $\logic{D4} \otimes \logic{D4}$ is complete w.r.t.~the product of the rational numbers $\mathbb{Q}$. 
Derivational and topological semantics can be considered as a special case of neighborhood semantics. So the result of \cite{Urid11:ModalLogicBitopRatioPlane} and corresponding result for $\logic{S4}$ from \cite{benthem:MultimodalLogicsProductsTopologies} can be obtained as corollaries from the main result of this paper. 

Neighborhood frames are usually considered in the context of non-normal logics since they are usually complete w.r.t. non-normal logics and Kripke frames are not. 
In this paper, however, we will consider only monotone neighborhood frames, that correspond to normal modal logics.
In some sense the results of this paper (and of \cite{benthem:MultimodalLogicsProductsTopologies}, \cite{Urid11:ModalLogicBitopRatioPlane}) shows that neighborhood semantics in some sense is more natural for products of normal modal logics, since there are no need to add extra axioms (at least in some cases).

\section{Language and logics}
In this paper we study propositional modal logic with modal operators. A formula is defined recursively as follows:
$$
\phi ::= p\; |\;\bot \; | \; \phi \to \phi \; | \; \Box_i \phi,
$$
where $p\in \mathrm{PROP}$ is a propositional letter and $\Box_i$ is a modal operator.
Other connectives are introduced as abbreviations: classical connectives are expressed through $\bot$ and $\to$,
dual modal operators $\Diamond_i$ are expressed as follows $\Diamond_i = \lnot\Box_i\lnot$.

\begin{definition} A \emph{normal logic\/} (or \emph{a logic,\/} for short) is
a set of modal formulas closed under Substitution $\left
(\frac{A(p_i)}{A(B)}\right )$, Modus Ponens $\left (\frac{A,\,
A\to B}{B}\right )$ and two Generalization rules $\left
(\frac{A}{\Box_i A}\right)$; containing all
classic tautologies and the following axioms
 $$
 \begin {array}{l}
 \Box_i (p\to q)\to (\Box_i p\to \Box_i q).
 \end{array}
$$

$\logic{K_n}$ denotes \emph{the minimal normal modal logic with $n$ modalities} and $\logic{K} = \logic{K_1}$.
\end{definition}

Let $\logic{L}$ be a logic and let $\Gamma$ be a set of formulas, then $\logic{L} +
\Gamma$ denotes the minimal logic containing $\logic{L}$ and $\Gamma$. If
$\Gamma = \set{A}$, then we write $\logic{L} + A$ rather than $\logic{L}+  \{ A \}$

\begin{definition}
Let $\logic{L_1}$ and $\logic{L_2}$ be two modal logics with one modality $\Box$ then \emph{fusion} of these logics is
\[
\logic{L_1} \otimes \logic{L_2} = K_2 + \logic{L_1}_{(\Box \to \Box_1)} + \logic{L_2}_{(\Box \to \Box_2)};
\] 
where $\logic{L_i}_{(\Box \to \Box_i)}$ is the set of all formulas from $\logic{L_i}$ where all $\Box$ replaced by $\Box_i$.
\end{definition}

In this paper we consider the following four well-known logics:
\begin{align*}
 \logic{D} &= \logic{K} + \Box p \to \romb p;\\
 \logic{T} &= \logic{K} + \Box p \to p;\\
 \logic{D4} &= \logic{D} + \Box p \to \Box\Box p;\\
 \logic{S4} &= \logic{T} + \Box p \to \Box\Box p.\\
\end{align*}

\section{Kripke frames}

The notion of Kripke frames and Kripke models is well known (see \cite{blackburn_modal_2002}), so we only define special kind of frames that we are using in this paper. We can call them \emph{fractal} frames because their basic property is that any cone  is isomorphic to the whole frame. In particular, we consider four types of infinite trees with fixed branching: irreflexive and transitive, reflexive and transitive, irreflexive and non-transitive (any point sees only next level) and reflexive and non-transitive.

\begin{definition}
 Let $A$ be a nonempty set.
\[
A^* = \setdef[ a_1 \ldots a_k]{ a_i \in A}
\]
be the set of all finite sequences of elements from $A$, including the empty sequence $\Lambda$. Elements from $A^*$ we will denote by letters with an arrow (e.g. $\ve{a} \in A^*$) The length of sequence $\ve{a} = a_1 \ldots a_k$ is $k$ (Notation: $l(\ve{a}) = k$) the length of the empty sequence equals 0 ($l (\Lambda) = 0$). Concatenation is denoted by ``$\cdot$'': $(a_1\ldots a_k) \cdot (b_1 \ldots b_l) = \ve{a}\cdot \ve{b} =  a_1\ldots a_k b_1 \ldots b_l$.
\end{definition}

\begin{definition}
  Let $A$ be a nonempty set. We define an infinite frame $F_{in}[A] = (A^*, R)$, such that for $\ve{a}, \ve{b} \in A^*$
\[
\ve{a} R \ve{b} \iff \exists x\in A \left( \ve{b} = \ve{a}\cdot x \right).
\] 
We also defined
\begin{align*}
 F_{rn}[A] &= (A^*, R^r), \hbox{ where $R^r = R \cup Id$ --- reflexive closure};\\
F_{it}[A] &= (A^*, R^*), \hbox{ where $R^* = \bigcup\limits_{i=1}^\infty R^i$ --- transitive closure};\\
F_{rt}[A] &= (A^*, R^{r*}).
\end{align*}

So ``$t$'' stands for transitive, ``$n$'' --- for non-transitive ``$r$'' for reflexive and ``$i$'' for irreflexive.
\end{definition}

The following easy-to-prove proposition shows that frames $F_{\xi \eta}[A]$ (where $\xi \in \set{i, r}$ and $\eta \in \set{t, n})$) are indeed fractal.
\begin{proposition}
 Let $F = F_{\xi \eta}[A] = (A^*, R)$ then
\[
\ve{a} R (\ve{a}\cdot\ve{c}) \iff \Lambda R \ve{c}.
\]
\end{proposition}

\begin{definition}\label{def:F_fusion}
Let $F_1 = F_{\xi_1 \eta_1}[A] = (A^*, R_1)$ be and $F_2 = F_{\xi_2 \eta_2}[B] = (B^*, R_2)$, where $\xi_1, \xi_2 \in \set{i, r}$ and  $\eta_1, \eta_2, \in \set{t, n})$, $A \cap B = \varnothing$, $A = \set{a_1, a_2, \ldots}$ and $B = \set{b_1, b_2, \ldots }$ then we define frame $F_1 \otimes F_2 = (W, R'_1, R'_2)$, as follows
\begin{align*}
 W &= (A \sqcup B)^* \\
\ve{x} R'_1 \ve{y} &\iff \ve{y} = \ve{x}\cdot \ve{z} \hbox{ for some } \ve{z} \in A^* \hbox{ such that } \Lambda R_1 \ve{z} \\
\ve{x} R'_2 \ve{y} &\iff \ve{y} = \ve{x}\cdot \ve{z} \hbox{ for some } \ve{z} \in B^* \hbox{ such that } \Lambda R_2 \ve{z} 
\end{align*}
\end{definition}

\begin{proposition}[\cite{KracWolt91:Propeindepaxiombimodlogic}, \cite{FinSch96}]
 Let $F_1$ and $F_2$ be as in Definition \ref{def:F_fusion} then
\begin{equation}
\Log(F_1 \otimes F_2) = \Log(F_1) \otimes \Log(F_2).
\end{equation} 
\end{proposition}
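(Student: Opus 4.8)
The plan is to prove the two inclusions separately, writing $\logic{L}_1 = \Log(F_1)$ and $\logic{L}_2 = \Log(F_2)$ for brevity, so that the claim reads $\Log(F_1\otimes F_2) = \logic{L}_1 \otimes \logic{L}_2$. The inclusion $\logic{L}_1\otimes\logic{L}_2 \subseteq \Log(F_1\otimes F_2)$ is soundness and is the routine half; the converse is completeness and carries the difficulty, since one must show that the single frame $F_1\otimes F_2$ already refutes every non-theorem of the fusion.

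For soundness I would first analyse each relation of $F_1\otimes F_2 = (W, R'_1, R'_2)$ in isolation. Writing every word $\ve{x}\in W = (A\sqcup B)^*$ as $\ve{x} = \ve{u}\cdot\ve{v}$ with $\ve{v}\in A^*$ its maximal $A$-suffix (so $\ve{u}$ is empty or ends in a letter of $B$), I claim the blocks $\ve{u}\cdot A^*$ partition $W$ and are precisely the $R'_1$-connected components. Indeed, appending an $A$-word to $\ve{x}$ never changes $\ve{u}$, so no $R'_1$-edge leaves a block, and the map $\ve{v}\mapsto \ve{u}\cdot\ve{v}$ is an isomorphism of $(A^*, R_1) = F_1$ onto $(\ve{u}\cdot A^*, R'_1)$ by the fractal Proposition. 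Hence $(W, R'_1)$ is a disjoint union of isomorphic copies of $F_1$, so $\Log(W, R'_1) = \Log(F_1) = \logic{L}_1$; symmetrically $\Log(W, R'_2) = \logic{L}_2$. Since $F_1\otimes F_2$ is a $\logic{K_2}$-frame whose two modal reducts validate $\logic{L}_1$ and $\logic{L}_2$ respectively, it validates the fusion $\logic{L}_1\otimes\logic{L}_2$.

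For completeness I would invoke the transfer theorems of the cited works: each of the four logics has the finite model property and FMP transfers to fusions, so $\logic{L}_1\otimes\logic{L}_2$ is complete with respect to its finite rooted frames. Thus, given $\phi\notin\logic{L}_1\otimes\logic{L}_2$, fix a finite rooted fusion frame $G = (V, S_1, S_2)$ with $(V, S_i)\models\logic{L}_i$, root $r$, and a valuation refuting $\phi$ at $r$. It then suffices to construct a p-morphism $g\colon (W, R'_1, R'_2)\pmor (V, S_1, S_2)$ with $g(\Lambda) = r$, after which pulling back the valuation refutes $\phi$ at $\Lambda$ in $F_1\otimes F_2$. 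I would build $g$ by recursion on word length: having set $g(\ve{x}) = t$, I use the universality of each infinitely-branching fractal frame --- namely that $F_{\xi_i\eta_i}[\,\cdot\,]$ p-morphically maps onto every finite rooted $\logic{L}_i$-frame --- to map the $A$-subtree below $\ve{x}$ onto the $S_1$-cone of $t$ and the $B$-subtree below $\ve{x}$ onto the $S_2$-cone of $t$, reading off the values $g(\ve{x}\cdot x_1)$ from these.

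The main obstacle is making this recursion coherent for both relations at once. Each word $\ve{x}\cdot x_1$ produced by an $A$-step must, at the next stage, sprout a $B$-subtree realising the $S_2$-cone of $g(\ve{x}\cdot x_1)$, and vice versa, so the two p-morphism conditions (back and forth) have to be maintained simultaneously along interleaved $A$- and $B$-moves. What makes this possible is exactly the interaction of two features already in hand: the disjointness of the alphabets $A$ and $B$, which keeps $R'_1$- and $R'_2$-steps from interfering except at their common node, and the fractal self-similarity, which guarantees that beneath every node one again finds full copies of $F_1$ and $F_2$ together with infinitely many fresh children to surject onto the required cones. This dovetailing argument is the substance of the transfer theorems I am citing, and in a self-contained write-up it is the step I would develop in full detail.
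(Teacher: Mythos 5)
The paper offers no proof of this proposition at all: it is quoted as a known result with citations to the transfer-theorem literature (Kracht--Wolter, Fine--Schurz), so there is nothing internal to compare your argument against line by line. Your proposal is a reasonable reconstruction and I see no error in it. The soundness half is self-contained and correct: the blocks $\ve{u}\cdot A^*$ (with $\ve{u}$ empty or ending in a $B$-letter) do partition $W$, no $R'_1$-edge leaves a block because appending an $A$-word cannot change the maximal $A$-suffix decomposition, and the fractal proposition gives the isomorphism of each block with $F_1$; this is exactly the observation the paper itself makes informally one level up, when it notes that forgetting $\tau'_2$ in $\X_1\times\X_2$ leaves a disjoint union of $\logic{L_1}$-frames. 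The completeness half correctly identifies the two imported ingredients --- transfer of the finite model property to fusions (which is precisely what the cited papers supply) and the universality of the $\omega$-branching fractal trees for finite rooted frames of the corresponding logic --- and the dovetailed p-morphism you sketch does go through: at each node one independently labels the countably many $A$-children so as to cover $S_1(g(\ve{x}))$ and the $B$-children so as to cover $S_2(g(\ve{x}))$; the back condition then needs only immediate children, and the forth condition for the transitive cases follows by composing single steps and using transitivity of $S_i$, while the reflexive cases are absorbed by reflexivity of $S_i$. The one place your write-up is genuinely thinner than a complete proof is exactly the step you flag yourself: the verification that this recursion satisfies forth and back for both relations simultaneously, including the degenerate reflexive loops and the fact that intermediate nodes of an $R'_1$-path never involve $B$-letters. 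That verification is routine given the disjointness of the alphabets, but it is the content that a self-contained proof would have to spell out, and it is the content the paper chooses to outsource to its references.
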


Let as define four frames: $F_{in}=F_{in}[\omega]$, $F_{rn} = F_{rn}[\omega]$, $F_{it} = F_{it}[\omega]$ and $F_{rt} = F_{rt}[\omega]$

\begin{proposition}\label{prop:compl_Kframes}
For just defined frames
\begin{enumerate}
 \item $\Log(F_{in}) = \logic{D}$;
 \item $\Log(F_{rn}) = \logic{T}$;
 \item $\Log(F_{it}) = \logic{D4}$;
 \item $\Log(F_{rt}) = \logic{S4}$.
\end{enumerate}
\end{proposition}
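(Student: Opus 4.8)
The plan is to prove each of the four equalities by establishing the two inclusions separately. For the inclusion $\logic{L} \subseteq \Log(F)$ (soundness) I would simply check that each tree satisfies the first-order frame conditions corresponding to the defining axioms of $\logic{L}$. Every node $\ve{a}$ of all four trees has infinitely many immediate successors $\ve{a}\cdot x$ with $x \in \omega$, so $F_{in}$ and $F_{it}$ are serial and validate $\Box p \to \Diamond p$; the frames $F_{rn}$ and $F_{rt}$ are reflexive by construction and validate $\Box p \to p$; and $F_{it}, F_{rt}$ are built from the transitive closure, hence are transitive and validate $\Box p \to \Box\Box p$. Putting these together gives $\logic{D} \subseteq \Log(F_{in})$, $\logic{T} \subseteq \Log(F_{rn})$, $\logic{D4} \subseteq \Log(F_{it})$ and $\logic{S4} \subseteq \Log(F_{rt})$.

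For the reverse inclusions I would invoke the finite model property. Each of $\logic{D}, \logic{T}, \logic{D4}, \logic{S4}$ is, by standard filtration arguments (see \cite{blackburn_modal_2002}), complete with respect to its class of finite rooted frames: finite serial frames for $\logic{D}$, finite reflexive frames for $\logic{T}$, finite serial transitive frames for $\logic{D4}$, and finite reflexive transitive frames for $\logic{S4}$. It therefore suffices to prove the following \emph{key lemma}: every finite rooted frame $G = (V, S)$ from the relevant class is a p-morphic image of the corresponding tree $F$. Indeed, if $\phi \notin \logic{L}$ then $\phi$ is refuted on some finite rooted $G$ from the class; as $G$ is a p-morphic image of $F$ we have $\Log(F) \subseteq \Log(G)$, whence $\phi \notin \Log(F)$, which yields $\Log(F) \subseteq \logic{L}$.

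To build the p-morphism $f$ I would proceed by recursion on the length of nodes, exploiting the $\omega$-branching to secure surjectivity. Set $f(\Lambda)$ to be the root of $G$; given $f(\ve{a}) = v$, the set $\setdef[w]{v S w}$ is finite and (by seriality, or reflexivity) nonempty, so I can choose the values $f(\ve{a}\cdot x)$ for $x \in \omega$ so that they range exactly over it, each such $w$ being hit at least once. For the non-transitive trees $F_{in}, F_{rn}$, where accessibility consists only of immediate successors (together with the identity in the reflexive case), the forth and back conditions hold directly: an immediate successor is sent to an $S$-successor of $v$, every $S$-successor of $v$ is the image of some immediate successor, and the reflexive loop at $\ve{a}$ matches $v S v$. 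For the transitive trees $F_{it}, F_{rt}$ the relation is the (reflexive) transitive closure, so forth must be verified along entire branches; here transitivity of $S$ is precisely what propagates the local condition $f(\ve{c}) S f(\ve{c}\cdot x)$ from $\ve{a}$ to an arbitrary descendant, while back is again witnessed by a single immediate successor. Surjectivity of $f$ onto $V$ follows because $G$ is rooted and the back condition forces every $S$-reachable point to occur in the image.

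I expect the transitive cases ($\logic{D4}$ and $\logic{S4}$) to be the main obstacle: one must check that the single local assignment at each node is simultaneously consistent with the forth condition for all (arbitrarily deep) descendants and with the back condition for every successor in $G$, and in the $\logic{D4}$ case one must be confident that the \emph{irreflexive} tree $F_{it}$ can nonetheless cover reflexive points of $G$ — which it does, since sending both endpoints of a tree edge to a point $v$ with $v S v$ correctly induces the loop and respects forth. Once the key lemma is secured for all four classes, combining it with soundness delivers the four equalities.
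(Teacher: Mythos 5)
The paper states this proposition without proof, citing it implicitly as a standard completeness fact, so there is no ``paper proof'' to match against; your argument is a correct and entirely standard way to supply the missing details. The decomposition is the expected one: soundness by checking the first-order frame conditions (seriality, reflexivity, transitivity) on the four trees, and completeness via the finite model property of $\logic{D}$, $\logic{T}$, $\logic{D4}$, $\logic{S4}$ together with a surjective p-morphism from the $\omega$-branching tree onto an arbitrary finite rooted frame of the corresponding class. The recursive construction of $f$ is sound: $\omega$-branching lets the children of $\ve{a}$ cover the finite nonempty set $S(f(\ve{a}))$ exactly, forth in the transitive cases propagates by transitivity of $S$ along the chain of local assignments, back is always witnessed by an immediate successor, and surjectivity follows since the image contains the root and is closed under $S$-successors. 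You also correctly identify and dispose of the one genuinely delicate point, namely that the irreflexive tree $F_{it}$ can cover reflexive points of a finite $\logic{D4}$-frame because forth never demands $f(\ve{a})\,S\,f(\ve{a})$. The only detail worth making explicit is that the filtrations used for $\logic{D}$ and $\logic{T}$ must be chosen so as to preserve seriality and reflexivity respectively (and one passes to a point-generated submodel to get rootedness); both are routine.
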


\section{Neighborhood frames}
In this section we consider neighborhood frames. All definitions and lemmas of this section are well-known and can be found in \cite{Segerberg1971} and \cite{Chellas1980}.

\begin{definition}
 A \emph{(monotone) neighborhood frame} (or an n-frame) is a pair $\X = (X, \tau)$, where $X$ is a nonempty set and $\tau: X \to 2^{2^X}$ such that $\tau(x)$ is a filter on $X$ for any $x$. We call function $\tau$ the \emph{neighborhood function} of $\X$ and sets from $\tau(x)$ we call \emph{neighborhoods of $x$}.
\emph{The neighborhood model} (n-model) is a pair $(\X, V)$, where $\X = (X, \tau)$ is a n-frame and $V: PV \to 2^X$ is a \emph{valuation}.  
In a similar way we define \emph{neighborhood 2-frame} (n-2-frame) as $(X, \tau_1, \tau_2)$ such that $\tau_i (x)$ is a filter on $X$ for any $x$, and a \emph{n-2-model}.
\end{definition}

\begin{definition}
\emph{The valuation of a formula} $\varphi$ at a point of a n-model $M = (\X, V)$ is defined by induction as usual for boolean connectives and for modalities as follows
\[
M, x \models \Box_i \psi \iff \exists V\in \tau_i(x) \forall y\in V (M,y \models \psi).
\]
Formula is valid in a n-model $M$ if it is valid at all points of $M$ (notation $M \models \varphi$).
Formula is valid in a n-frame $\X$ if it is valid in all models based on $\X$ (notation $\X \models \varphi$). 
We write $\X \models L$ if for any $\varphi \in L$, $\X\models \varphi$.
Logic of a class of n-frames $\mathcal{C}$ as $\Log(\mathcal{C}) = \setdef[\varphi]{\X \models \varphi \hbox{ for some } \X \in \mathcal{C}}$.
For logic $L$ we also define $nV(L) =\setdef[\X]{\hbox{$\X$ is an n-frame and } \X\models L}$.
\end{definition}

\begin{definition}
 Let $F = (W, R)$ be a Kripke frame. We define n-frame $\N(F) = (W, \tau)$ as follows. For any $w \in W$
\[
\tau (w) = \setdef[U]{ R(w) \subseteq U \subseteq W}.
\] 
\end{definition}

\begin{lemma}\label{lem:n-frame_from_kframe}
 Let $F = (W, R)$ be a Kripke frame. Then
\[
\Log(\N(F)) = \Log(F).
\]
\end{lemma}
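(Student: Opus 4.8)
The plan is to prove the stronger pointwise statement that $\N(F)$ and $F$ validate exactly the same formulas under \emph{every} valuation, from which equality of the logics is immediate. Since $\N(F)$ and $F$ share the same carrier $W$, any valuation $V\colon \mathrm{PROP}\to 2^W$ can be read simultaneously as a Kripke model $(F,V)$ and as an n-model $(\N(F),V)$. First I would record that $\N(F)$ is a legitimate n-frame, i.e.\ that $\tau(w)=\setdef[U]{R(w)\subseteq U\subseteq W}$ is a filter for every $w$: it contains $W$, it is upward closed by definition, and it is closed under intersection because $R(w)\subseteq U_1$ and $R(w)\subseteq U_2$ give $R(w)\subseteq U_1\cap U_2$.

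The core of the argument is an induction on the structure of $\varphi$ establishing
$$
(F,V),w\models\varphi \iff (\N(F),V),w\models\varphi \quad\text{for all } w\in W.
$$
The atomic case holds because the valuation is the same in both models, and the Boolean cases follow at once from the induction hypothesis, since the two semantics treat $\bot$ and $\to$ identically.

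The only case requiring work is $\varphi=\Box\psi$. Writing $\|\psi\|=\setdef[y]{(F,V),y\models\psi}$ for the truth set, which by the induction hypothesis is the same subset of $W$ in both models, Kripke semantics gives $(F,V),w\models\Box\psi$ iff $R(w)\subseteq\|\psi\|$, while neighborhood semantics gives $(\N(F),V),w\models\Box\psi$ iff there exists $U\in\tau(w)$ with $U\subseteq\|\psi\|$. The key observation, which I expect to be the crux of the whole lemma, is that $R(w)$ is itself the least element of $\tau(w)$: indeed $R(w)\in\tau(w)$ because $R(w)\subseteq R(w)\subseteq W$. Hence if some $U\in\tau(w)$ satisfies $U\subseteq\|\psi\|$, then $R(w)\subseteq U\subseteq\|\psi\|$; and conversely, if $R(w)\subseteq\|\psi\|$ then $U=R(w)$ is a witnessing neighborhood. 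Thus the existential neighborhood condition and the universal successor condition coincide, completing the induction.

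Finally I would conclude the lemma. A formula is valid in a frame iff it is true at every point under every valuation; since the pointwise equivalence above holds for every $V$ and every $w$, we obtain $F\models\varphi \iff \N(F)\models\varphi$ for each $\varphi$, and therefore $\Log(F)=\Log(\N(F))$. The argument is otherwise routine; the single point deserving emphasis is the minimality of $R(w)$ in $\tau(w)$, which is exactly what collapses the existential quantifier over neighborhoods to the universal quantifier over successors in Kripke semantics.
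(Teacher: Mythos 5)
Your proof is correct and is exactly the ``straightforward'' argument the paper has in mind (the paper gives no details beyond that remark): a pointwise induction on formulas in which the minimality of $R(w)$ in the filter $\tau(w)$ collapses the existential neighborhood clause to the Kripke clause.
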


The proof is straightforward. 

\begin{definition} \label{def:bounded_morphism}
 Let $\X = (X, \tau_1, \ldots)$ and $\Y = (Y, \sigma_1, \ldots)$ be n-frames. Then function $f: X \to Y$ is a \emph{bounded morphism} if 
\begin{enumerate}
 \item $f$ is surjective;
 \item for any $x\in X$ and $U \in \tau_i(x)$ $f(U) \in \sigma_i(f(x))$;
 \item for any $x\in X$ and $V \in \sigma_i(f(x))$ there exists $U \in \tau_i(x)$, such that $f(U) \subseteq V$.
\end{enumerate}
In notation $f: \X \pmor \Y$.
\end{definition}

\begin{lemma}\label{lem:pmorhism4n-frame}
 Let $\X = (X, \tau_1, \ldots)$, $\Y = (Y, \sigma_1, \ldots)$ be n-frames, $f: \X \pmor \Y$ $V'$ is a valuation on $\Y$. We define $V(p) = f^{-1}(V'(p))$. Then
\[
\X, V, x \models \varphi \iff \Y, V', f(x) \models \varphi.
\] 
\end{lemma}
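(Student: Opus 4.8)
The plan is to prove the equivalence by induction on the structure of $\varphi$, carried out simultaneously for all points $x$ so that the inductive hypothesis is available at every point reachable through a neighborhood. The base and boolean cases are immediate. For $\varphi = p$ we have $\X, V, x \models p$ iff $x \in V(p) = f^{-1}(V'(p))$ iff $f(x) \in V'(p)$ iff $\Y, V', f(x) \models p$, directly from the definition of $V$; the case $\varphi = \bot$ is vacuous on both sides; and for $\varphi = \psi_1 \to \psi_2$ the claim follows from the two instances of the inductive hypothesis together with the boolean reading of $\to$. All of the content sits in the modal step $\varphi = \Box_i \psi$, where I would handle the two directions separately and match each against one of the two transfer conditions of Definition \ref{def:bounded_morphism}.

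For the forward direction, suppose $\X, V, x \models \Box_i \psi$, so there is a neighborhood $U \in \tau_i(x)$ with $\X, V, y \models \psi$ for every $y \in U$. By the inductive hypothesis each such $y$ gives $\Y, V', f(y) \models \psi$, hence every point of $f(U)$ satisfies $\psi$ in $\Y$. Condition (2) of the bounded morphism guarantees $f(U) \in \sigma_i(f(x))$, so $f(U)$ is itself a neighborhood of $f(x)$ all of whose points satisfy $\psi$, which is exactly $\Y, V', f(x) \models \Box_i \psi$.

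For the backward direction, suppose $\Y, V', f(x) \models \Box_i \psi$, witnessed by some $N \in \sigma_i(f(x))$ with $\Y, V', z \models \psi$ for all $z \in N$. Condition (3) supplies $U \in \tau_i(x)$ with $f(U) \subseteq N$; then for any $y \in U$ we have $f(y) \in N$, so $\Y, V', f(y) \models \psi$, and the inductive hypothesis yields $\X, V, y \models \psi$. Thus $U$ is a neighborhood of $x$ whose points all satisfy $\psi$, giving $\X, V, x \models \Box_i \psi$.

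I do not expect a genuine obstacle: the argument is a routine structural induction, and the only thing to get right is that the two implications of the modal case invoke conditions (2) and (3) respectively, so that forgetting which one is needed where is the typical slip. It is worth noting that surjectivity (condition (1)) plays no role in this pointwise statement; condition (2) is precisely what pushes a $\tau_i$-neighborhood forward to a $\sigma_i$-neighborhood, and condition (3) is precisely what pulls a $\sigma_i$-neighborhood back, and these align exactly with the monotone existential reading of $\Box_i$ in the satisfaction clause. Since each $\tau_i(x)$ and $\sigma_i(x)$ is a filter, no separate appeal to upward closure is required beyond what condition (2) already delivers.
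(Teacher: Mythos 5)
Your proof is correct and is exactly the "standard induction on length of formula" that the paper invokes without spelling out: the modal case uses condition (2) of Definition \ref{def:bounded_morphism} for the forward direction and condition (3) for the backward direction, which is the intended argument. Your remark that surjectivity is not needed for this pointwise equivalence (only for transferring validity of formulas from $\Y$ to $\X$ in the corollary) is also accurate.
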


The proof is by standard induction on length of formula.

\begin{corollary}
 If $f: \X \pmor \Y$ then $\Log(\Y) \subseteq \Log(\X)$.
\end{corollary}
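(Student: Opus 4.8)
The plan is to establish $\Log(\Y)\subseteq\Log(\X)$ in contrapositive form: I will show that any formula that fails somewhere in $\X$ must also fail somewhere in $\Y$. So I start from the assumption $\varphi\notin\Log(\X)$, unfold the definition of frame validity, and fix a valuation $V$ on $\X$ together with a point $x_0\in X$ such that $\X,V,x_0\not\models\varphi$. The goal is to manufacture from this data a valuation $V'$ on $\Y$ and a point $y_0\in Y$ with $\Y,V',y_0\not\models\varphi$, which is exactly what $\varphi\notin\Log(\Y)$ demands.

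The engine of the argument is Lemma~\ref{lem:pmorhism4n-frame}: once a valuation on $\X$ is presented as a preimage $f^{-1}(V')$ of a valuation $V'$ on $\Y$, truth of every formula is transported along $f$, so that $\X,f^{-1}(V'),x\models\psi$ holds exactly when $\Y,V',f(x)\models\psi$. The first step, therefore, is to bring the given refuting valuation $V$ into preimage form. I would attempt this by reading a valuation $V'$ on $\Y$ off the behaviour of $V$ on the fibres $f^{-1}(y)$, and then checking, using surjectivity (condition~(1) of Definition~\ref{def:bounded_morphism}) together with the neighborhood-transfer conditions (2) and (3), that the resulting $V'$ returns the original data, i.e.\ that $f^{-1}(V')=V$ in the respects relevant to $\varphi$.

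With the valuation in preimage form the conclusion is immediate. Since $f$ is onto, $x_0$ lies over $y_0\eqdef f(x_0)$, and Lemma~\ref{lem:pmorhism4n-frame} converts the failure $\X,V,x_0\not\models\varphi$ into the failure $\Y,V',y_0\not\models\varphi$. Hence $\varphi\notin\Log(\Y)$, and contraposition yields $\Log(\Y)\subseteq\Log(\X)$. Nothing in the reasoning refers to a particular modal index, so the $\tau_i$--$\sigma_i$ bookkeeping runs simultaneously over all the $\Box_i$, and the statement holds verbatim for n-2-frames as well.

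I expect the main obstacle to be precisely this reduction of the arbitrary refuting valuation $V$ to a preimage valuation. A generic valuation on $\X$ need not be constant on the fibres of $f$, so it is not literally of the form $f^{-1}(V')$, and the full weight of the bounded-morphism structure, namely the matching of neighborhoods in $\tau_i(x)$ with neighborhoods in $\sigma_i(f(x))$ supplied by conditions (2) and (3), must be brought to bear exactly here. Keeping straight which quantifier (``there exists a neighborhood'' versus ``for all points of a neighborhood'') is being carried across $f$, and in which direction each of conditions (2) and (3) is used, is the step I would treat most carefully.
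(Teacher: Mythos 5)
Your attempt runs the inclusion in a direction that cannot be proved, and the step you yourself flag as the main obstacle is exactly where it breaks down: an arbitrary refuting valuation $V$ on $\X$ need not be constant on the fibres of $f$, and no valuation $V'$ on $\Y$ can recover it, even ``in the respects relevant to $\varphi$'' --- Lemma \ref{lem:pmorhism4n-frame} transports only valuations of the form $f^{-1}(V')$, and a refutation of $\varphi$ at $x_0$ may depend essentially on $V$ separating points within a single fibre. In fact the implication you set out to prove is false as printed. Take $\X = \N(F)$ for the two-element irreflexive cycle $F$ (where $xRy$ and $yRx$) and $\Y = \N(G)$ for a single reflexive point $G$; the collapse map satisfies all three conditions of Definition \ref{def:bounded_morphism}, yet $\Box p \to p \in \Log(\Y) \setminus \Log(\X)$, so $\Log(\Y) \not\subseteq \Log(\X)$. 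The corollary's statement contains a typo: the intended inclusion --- and the one the paper actually uses later, e.g.\ in Corollary \ref{cor:logicNf}, where $f_F : \Nf(F) \pmor \N(F)$ is invoked to conclude $\Log(\Nf(F)) \subseteq \Log(\N(F))$ --- is $\Log(\X) \subseteq \Log(\Y)$: validity is preserved from the domain to its bounded morphic image, just as for Kripke frames.

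In that correct direction the proof is immediate and your problematic reduction disappears, because the valuation is pulled back rather than pushed forward: given $\varphi \notin \Log(\Y)$, fix $V'$ on $\Y$ and $y_0$ with $\Y, V', y_0 \not\models \varphi$; by surjectivity choose $x_0 \in f^{-1}(y_0)$, set $V = f^{-1}(V')$, and Lemma \ref{lem:pmorhism4n-frame} gives $\X, V, x_0 \not\models \varphi$, hence $\varphi \notin \Log(\X)$; contraposition yields $\Log(\X) \subseteq \Log(\Y)$. Note that conditions (2) and (3) of Definition \ref{def:bounded_morphism} are consumed entirely inside the proof of Lemma \ref{lem:pmorhism4n-frame}; at the level of the corollary only surjectivity and the lemma itself are needed. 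Your correct instinct that the neighborhood bookkeeping would have to be redeployed inside the corollary was in fact a symptom that the argument was being run against the grain of the lemma.
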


\begin{definition}
 Let $\X_1=(X_1, \tau_1)$ and $\X_2 =(X_2, \tau_2)$ be two n-frames. Then \emph{the product} of these n-frames is an n-2-frame defined as follows
\[
\begin{array}{l}
\X_1 \times \X_2 = (X_1 \times X_2, \tau_1', \tau_2'),\\
\tau_1'(x_1, x_2) = \setdef[U\subseteq X_1 \times X_2]{\exists V( V \in \tau_1(x_1) \;\&\; V \times \set{x_2}  \subseteq U)},\\
\tau_2'(x_1, x_2) = \setdef[U\subseteq X_1 \times X_2]{\exists V( V \in \tau_2(x_2) \;\&\; \set{ x_1} \times V \subseteq U)}.
\end{array}
\]
\end{definition}

\begin{definition}
For two unimodal logics $\logic{L_1}$ and $\logic{L_2}$ we define \emph{n-product} of them as follows
\[
\logic{L_1} \times_n \logic{L_2} = \Log (\setdef[\X_1 \times \X_2]{\X_1\in nV(L_1) \;\&\; \X_2 \in nV(L_2)})
\]
\end{definition}

Note that $\X_1 \times \X_2$ if we forget about one of its neighborhood functions say $\tau'_2$ then $\X_1 \times \X_2$ will be a disjoint union of $\logic{L_1}$ n-frames. Hence

\begin{proposition}[\cite{Sano11:AxiomHybriProduMonotNeighFrame}]\label{prop:fusin_of_n-frames}
 For two unimodal logics $\logic{L_1}$ and $\logic{L_2}$
\[
\logic{L_1} \otimes \logic{L_2} \subseteq \logic{L_1} \times_n \logic{L_2}.
\]
\end{proposition}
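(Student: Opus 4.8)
The plan is to exploit the minimality of the fusion. Since $\logic{L_1} \times_n \logic{L_2}$ is by definition the logic of a class of n-2-frames, it is automatically a normal bimodal logic: each $\tau_i(x)$ is a filter (hence nonempty and upward closed), so every such frame validates $\logic{K_2}$ and both generalization rules, and the logic of any class of frames is closed under substitution and modus ponens. Consequently, to prove the inclusion it suffices to check that every generator of $\logic{L_1} \otimes \logic{L_2}$ lies in $\logic{L_1} \times_n \logic{L_2}$, i.e.\ that for each $A \in \logic{L_1}$ the formula $A_{(\Box \to \Box_1)}$ and for each $A \in \logic{L_2}$ the formula $A_{(\Box \to \Box_2)}$ is valid on every product $\X_1 \times \X_2$ with $\X_1 \in nV(L_1)$ and $\X_2 \in nV(L_2)$.

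The core observation, already indicated before the statement, is that the ``$\Box_1$-reduct'' $(X_1 \times X_2, \tau_1')$ of the product is a disjoint union of copies of $\X_1$, one for each $x_2 \in X_2$. First I would make this precise: unravelling the definition of $\tau_1'$ and using that $\tau_1(x_1)$ is upward closed, one checks that $U \in \tau_1'(x_1, x_2)$ holds exactly when the slice-section $\setdef[y_1]{(y_1, x_2) \in U}$ belongs to $\tau_1(x_1)$. This is precisely the neighborhood condition defining a disjoint union, so the map $y_1 \mapsto (y_1, x_2)$ is an isomorphism of $\X_1$ onto the slice $X_1 \times \set{x_2}$, and $(X_1 \times X_2, \tau_1') \cong \bigsqcup_{x_2 \in X_2} \X_1$.

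From here the argument is short. Since validity is preserved by, and reflected along the components of, disjoint unions, $\Log(X_1 \times X_2, \tau_1') = \Log(\X_1) \supseteq \logic{L_1}$, because $\X_1 \models \logic{L_1}$. Any formula $A_{(\Box \to \Box_1)}$ with $A \in \logic{L_1}$ contains only the modality $\Box_1$, so its truth on the full n-2-frame $\X_1 \times \X_2$ coincides with its truth on this reduct; hence it is valid on $\X_1 \times \X_2$. The symmetric computation, using $\tau_2'$ and the slices $\set{x_1} \times X_2$, disposes of the $\logic{L_2}$-formulas. Thus all generators of $\logic{L_1} \otimes \logic{L_2}$ belong to $\logic{L_1} \times_n \logic{L_2}$, and since the latter is a normal logic, the entire fusion is contained in it.

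The step I expect to require the most care is the disjoint-union identification. One has to confirm both that the neighborhood function on a single slice is literally a copy of $\tau_1$ (rather than merely a bounded-morphic image) and that $\tau_1'$ never attaches to a point a neighborhood whose mixing of different slices could affect $\Box_1$-truth; the upward closure of the filters $\tau_1(x_1)$ is exactly what makes the slice-section criterion equivalent to membership in $\tau_1'$. Both facts follow directly from the definition, but stating the slice isomorphism cleanly and invoking the standard fact that the logic of a disjoint union is the intersection of the logics of its summands is the crux on which the rest of the proof rests.
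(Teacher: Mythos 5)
Your proposal is correct and is exactly the argument the paper has in mind: the remark immediately preceding the proposition observes that forgetting $\tau_2'$ turns $\X_1 \times \X_2$ into a disjoint union of copies of $\X_1$, and the inclusion follows by the minimality of the fusion. You have simply spelled out the slice-section verification and the appeal to normality that the paper leaves implicit.
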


\section{Main construction}

The construction in this section was inspired by \cite{benthem:MultimodalLogicsProductsTopologies}, but it is not a straightforward generalization. 
In case of $\logic{S4 \times_n S4}$ it is, in essence, very similar to the construction in \cite{benthem:MultimodalLogicsProductsTopologies}. 
However, here we operate only with words (finite or infinite), and not with numbers and fractions. It makes proofs shorter and allows us to generalize the results to non-transitive cases.


\newcommand{\st}{\mathop{st}}

Let $F = (A^*, R) = F_{\xi \eta}[A]$ and $0 \notin A$. We define set of ``pseudo-infinite'' sequences
\[
X = \setdef[a_1 a_2 \ldots]{a_i \in A \cup \set{0}\ \&\ \exists N \forall k\ge N (a_k = 0)}.
\] 

Define $f_F:X \to A^*$ which ``fogets'' all zeros. For $\alpha \in X$ such that $\alpha = a_1 a_2 \ldots$ we define 
\begin{align*}
\st(\alpha) &= \min \setdef[N]{\forall k\ge N (a_k = 0)};\\
\alpha|_k &= a_1 \ldots a_k;\\ 
U_k(\alpha) &= \setdef[\beta \in X]{\alpha |_m = \beta |_m \ \&\ f_F(\alpha) R f_F(\beta), \hbox{ where } m = \max(k, \st(\alpha))}.
\end{align*}

\begin{lemma}\label{lem:U_m-base}
$U_k (\alpha) \subseteq U_m (\alpha)$ whenever $k \ge m$. 
\end{lemma}
\begin{proof}
Let $\beta \in U_k (\alpha)$. Since $\alpha |_k = \beta |_k$ and $k \ge m$ then $\alpha |_m = \beta |_m$. Hence, $\beta \in U_m (\alpha)$.
\end{proof}

\begin{definition}\label{def:n-frame_from_fframe}
Due to Lemma \ref{lem:U_m-base} sets $U_n(\alpha)$ forms a filter base. So we can define
\begin{align*}
\tau (\alpha) &- \hbox{the filter with base } \setdef[U_n(\alpha)]{n \in \omega};\\
\Nf(F) &= (X,\tau) \hbox{ --- is \emph{the n-frame based on} $F$.} 
\end{align*}
\end{definition}

\begin{lemma}\label{lem:bmorphism_wframe2nframe}
 Let $F = (A^*, R) = F_{\xi, \eta}[A]$ then
\[
f_F : \Nf(F) \pmor \N(F).
\]
\end{lemma}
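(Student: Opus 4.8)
The plan is to verify the three conditions of Definition~\ref{def:bounded_morphism} for $f_F$, writing $\tau$ for the neighborhood function of $\Nf(F)$ and $\sigma$ for that of $\N(F)$, so that $\sigma(w) = \setdef[U]{R(w) \subseteq U \subseteq A^*}$ is the principal filter generated by the successor set $R(w)$. Surjectivity is immediate: for $\ve{a} = a_1 \ldots a_k \in A^*$ the sequence $\alpha = a_1 \ldots a_k 0 0 \ldots$ lies in $X$ and, since $0 \notin A$, satisfies $f_F(\alpha) = \ve{a}$. For the remaining two conditions I would first isolate the single identity on which everything rests, namely
\[
f_F(U_n(\alpha)) = R(f_F(\alpha)) \qquad \text{for all } \alpha \in X \text{ and } n \in \omega .
\]
Because $\setdef[U_n(\alpha)]{n \in \omega}$ is a base of $\tau(\alpha)$ and $\sigma(f_F(\alpha))$ consists of all supersets of $R(f_F(\alpha))$, this identity yields both conditions at once.

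The inclusion $f_F(U_n(\alpha)) \subseteq R(f_F(\alpha))$ is read off directly from the definition of $U_n(\alpha)$: every $\beta \in U_n(\alpha)$ satisfies $f_F(\alpha)\, R\, f_F(\beta)$ by definition, so $f_F(\beta) \in R(f_F(\alpha))$. For the reverse inclusion, write $w = f_F(\alpha)$ and $m = \max(n, \st(\alpha))$, and take any $v \in R(w)$. Since all symbols of $\alpha$ from position $\st(\alpha)$ onward are $0$ and $m \ge \st(\alpha)$, truncation at $m$ loses no nonzero symbol, so $f_F(\alpha|_m) = f_F(\alpha) = w$. By the fractal Proposition, $w R v$ in any of the four frames $F_{\xi\eta}[A]$ means precisely that $v = w \cdot \ve{c}$ for some $\ve{c} \in A^*$ with $\Lambda R \ve{c}$; this is exactly the place where the four cases (reflexive or irreflexive, transitive or non-transitive) are handled uniformly. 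Setting $\beta = (\alpha|_m) \cdot \ve{c} \cdot 0 0 \ldots \in X$, we get $\beta|_m = \alpha|_m$, and since $f_F$ forgets zeros while $\ve{c}$ contains none, $f_F(\beta) = f_F(\alpha|_m) \cdot \ve{c} = w \cdot \ve{c} = v$. Together with $w R v$ this shows $\beta \in U_n(\alpha)$ and $f_F(\beta) = v$, so $v \in f_F(U_n(\alpha))$.

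Finally I would read off the two conditions from the identity. For condition~2, any $U \in \tau(\alpha)$ contains some $U_n(\alpha)$, hence $f_F(U) \supseteq f_F(U_n(\alpha)) = R(f_F(\alpha))$, so $f_F(U) \in \sigma(f_F(\alpha))$. For condition~3, given $V \in \sigma(f_F(\alpha))$, i.e.\ $R(f_F(\alpha)) \subseteq V$, the base element $U = U_0(\alpha) \in \tau(\alpha)$ already satisfies $f_F(U) = R(f_F(\alpha)) \subseteq V$. The only genuinely delicate step is the construction of $\beta$ in the reverse inclusion: one must check that $\beta$ agrees with $\alpha$ on the first $m$ coordinates while mapping onto the prescribed successor $v$, and the fractal Proposition is precisely what makes this construction go through simultaneously for all four choices of $\xi$ and $\eta$.
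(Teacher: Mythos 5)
Your proposal is correct and follows essentially the same route as the paper: surjectivity via $\ve{a}\cdot 0^\omega$, and both neighborhood conditions reduced to the identity $f_F(U_n(\alpha)) = R(f_F(\alpha))$ on the base sets. The only difference is that you actually prove that identity (the reverse inclusion via the fractal Proposition and the witness $\beta = (\alpha|_m)\cdot\ve{c}\cdot 0^\omega$), whereas the paper asserts it without proof, so your write-up is a strictly more detailed version of the same argument.
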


\begin{proof}
From now on in this proof we will omit the subindex in $f_F$.
Let $\Nf(F) = (X, \tau)$. 
Since for any $\ve{x} \in A^*$ sequence $\ve{x} \cdot 0^\omega \in X$ and $f(\ve{x} \cdot 0^\omega) = \ve{x}$ then $f$ is surjective.

Assume, that $x\in X$ and $U \in \tau(x)$. We need to prove that $R(f(x)) \subseteq f(U)$. 
There is $m$ such that $U_m(x) \subseteq U$ and since $f(U_m(x)) = R(f(x))$ then
\[
R(f(x)) = f(U_m(x)) \subseteq f(U).
\]

Assume that $x\in X$ and $V$ is a neighborhood of $x$, i.e.\/ $R(f(x)) \subseteq V$. We need to prove that there exists $U \in \tau(x)$, such that $f(U) \subseteq V$. As $U$ we take $U_m(x)$ for some $m \ge st(x)$, then
\[
f(U_m(x))=R(f(x)) \subseteq V. 
\]
\end{proof}

\begin{corollary}\label{cor:logicNf}
 For frame $F = F_{\xi \eta}[A]$ $\Log(\Nf(F)) \subseteq \Log(F)$.
\end{corollary}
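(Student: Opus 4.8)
The plan is to obtain the inclusion as an immediate composition of the bounded morphism already constructed in Lemma~\ref{lem:bmorphism_wframe2nframe} with the transfer of validity under bounded morphisms and the identification of $\Log(\N(F))$ with $\Log(F)$. All of the genuine work has been carried out in Lemma~\ref{lem:bmorphism_wframe2nframe}, where surjectivity and the forth/back conditions for $f_F$ were checked against the filters generated by the sets $U_k(\alpha)$; here nothing further about the internal structure of $\Nf(F)$ need be examined.

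First I would record, from Lemma~\ref{lem:bmorphism_wframe2nframe}, the surjective bounded morphism $f_F : \Nf(F) \pmor \N(F)$, with domain $\Nf(F)$ and image $\N(F)$. Next I would apply the truth-preservation property of Lemma~\ref{lem:pmorhism4n-frame}: if a formula $\varphi$ fails in some model on $\N(F)$, say $\N(F), V', y \not\models \varphi$, then choosing $x$ with $f_F(x) = y$ (surjectivity) and pulling the valuation back along $V = f_F^{-1}(V')$ yields $\Nf(F), V, x \not\models \varphi$ by Lemma~\ref{lem:pmorhism4n-frame}. Taking the contrapositive over all $\varphi$ gives
\[
\Log(\Nf(F)) \subseteq \Log(\N(F)).
\]
Finally I would invoke Lemma~\ref{lem:n-frame_from_kframe} to rewrite $\Log(\N(F)) = \Log(F)$, and chaining the two yields $\Log(\Nf(F)) \subseteq \Log(F)$, as required.

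I do not anticipate a real obstacle, since the statement is a two-step composition of established results; the single point requiring attention is the direction of the inclusion. Because $f_F$ maps $\Nf(F)$ \emph{onto} $\N(F)$, any refutation found in the image $\N(F)$ lifts along a pulled-back valuation to a refutation in the domain $\Nf(F)$, so it is $\Log(\Nf(F))$ that is contained in $\Log(\N(F))$, and hence in $\Log(F)$; one must apply the transfer with $\Nf(F)$ as domain and $\N(F)$ as codomain, not the reverse.
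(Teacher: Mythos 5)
Your proposal is correct and follows exactly the paper's own route: compose the bounded morphism $f_F : \Nf(F) \pmor \N(F)$ from Lemma~\ref{lem:bmorphism_wframe2nframe} with the validity transfer of Lemma~\ref{lem:pmorhism4n-frame} to get $\Log(\Nf(F)) \subseteq \Log(\N(F))$, then apply Lemma~\ref{lem:n-frame_from_kframe} to identify $\Log(\N(F))$ with $\Log(F)$. Your explicit check that the inclusion runs from domain to codomain is a correct unpacking of what the paper leaves implicit.
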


\begin{proof}
 It follows from Lemmas \ref{lem:n-frame_from_kframe}, \ref{def:bounded_morphism} and \ref{lem:bmorphism_wframe2nframe}
\[
\Log(\Nf(F)) \subseteq \Log(\N(F)) = \Log(F). 
\]
\end{proof}

\begin{proposition}\label{prop:logicsOf_omega_Nf_framas}
 Let $F_{in} = F_{in}[\omega]$, $F_{rn} = F_{rn}[\omega]$, $F_{it} = F_{it}[\omega]$ and $F_{rt} = F_{rt}[\omega]$ then
\begin{enumerate}
 \item\label{it1} $\Log(\Nf(F_{in})) = \logic{D}$;
 \item\label{it2} $\Log(\Nf(F_{rn})) = \logic{T}$;
 \item\label{it3} $\Log(\Nf(F_{it})) = \logic{D4}$;
 \item\label{it4} $\Log(\Nf(F_{rt})) = \logic{S4}$.
\end{enumerate}
\end{proposition}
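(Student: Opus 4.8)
The plan is to prove each item by two inclusions. The inclusion $\Log(\Nf(F)) \subseteq \logic{L}$, where $F$ is the relevant frame among $F_{in},F_{rn},F_{it},F_{rt}$ and $\logic{L}$ is the corresponding logic $\logic{D},\logic{T},\logic{D4},\logic{S4}$, is immediate: Corollary \ref{cor:logicNf} gives $\Log(\Nf(F)) \subseteq \Log(F)$, and Proposition \ref{prop:compl_Kframes} identifies $\Log(F)$ with $\logic{L}$. So all the work lies in the converse $\logic{L} \subseteq \Log(\Nf(F))$.

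Since each $\tau(\alpha)$ is a filter, the set of formulas valid in $\Nf(F)$ is a normal modal logic (it contains $\logic{K}$ and is closed under Substitution, Modus Ponens and Generalization, using that $\tau(\alpha)$ contains $X$ and is closed under finite intersection and supersets). Hence it suffices to check that $\Nf(F)$ validates the defining axioms of $\logic{L}$. I will use the standard correspondences for filter neighborhood frames: writing $b(S) = \setdef[\beta \in X]{S \in \tau(\beta)}$ for the truth set of $\Box\psi$ when $\psi$ has truth set $S$, one has (i) $\Box p \to \romb p$ valid iff $\varnothing \notin \tau(\alpha)$ for every $\alpha$; (ii) $\Box p \to p$ valid iff $\alpha \in V$ for every $\alpha$ and every $V \in \tau(\alpha)$; (iii) $\Box p \to \Box\Box p$ valid iff $b(S) \in \tau(\alpha)$ whenever $S \in \tau(\alpha)$. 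Because the $U_k(\alpha)$ form a filter base for $\tau(\alpha)$ and $b$ is monotone, each property may be tested on the base sets $U_k(\alpha)$ alone.

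The checks for seriality and reflexivity are short bookkeeping with the base sets, using that $A=\omega$ is infinite and $0 \notin A$. For seriality (needed for $F_{in},F_{it}$): by Lemma \ref{lem:U_m-base} the base sets are nested, so $\varnothing \in \tau(\alpha)$ would force some $U_k(\alpha)=\varnothing$; but extending $\alpha$ by one nonzero letter just after position $m=\max(k,\st(\alpha))$ and then zeros yields $\beta$ with $\alpha|_m=\beta|_m$ and $f_F(\beta)=f_F(\alpha)\cdot x$, an immediate $R$-successor (hence also an $R$-successor in the transitive closure), so each $U_k(\alpha)\neq\varnothing$. For reflexivity (needed for $F_{rn},F_{rt}$): $\alpha \in U_k(\alpha)$ for every $k$, since $f_F(\alpha) R f_F(\alpha)$ holds exactly because $R$ is reflexive in these two frames; thus $\alpha$ lies in every neighborhood.

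The main obstacle is transitivity, required for $F_{it}$ and $F_{rt}$. Here I fix a base set $U_k(\alpha)$ and show $U_k(\alpha) \subseteq b(U_k(\alpha))$, i.e.\ that $U_k(\alpha) \in \tau(\beta)$ for every $\beta \in U_k(\alpha)$; since $U_k(\alpha)\in\tau(\alpha)$, upward closure then yields $b(U_k(\alpha))\in\tau(\alpha)$, and monotonicity of $b$ extends this to every $S\in\tau(\alpha)$. Given $\beta \in U_k(\alpha)$, the claim is $U_l(\beta) \subseteq U_k(\alpha)$ as soon as $l$ is large enough that $\max(l,\st(\beta)) \ge m$, where $m=\max(k,\st(\alpha))$. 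Indeed, for $\gamma \in U_l(\beta)$ the agreement on the first $\max(l,\st(\beta))\ge m$ coordinates forces $\gamma|_m = \beta|_m = \alpha|_m$, while $f_F(\alpha) R f_F(\beta)$ and $f_F(\beta) R f_F(\gamma)$ combine through transitivity of $R$ to give $f_F(\alpha) R f_F(\gamma)$, so $\gamma \in U_k(\alpha)$. This is precisely where the transitive closure in $F_{it},F_{rt}$ and the fractal design of the $U_k$ (the $\max$ with $\st$ keeping lengths aligned) are essential; the non-transitive frames $F_{in},F_{rn}$ correctly fail this axiom, consistent with the upper bound already pinning their logics to $\logic{D}$ and $\logic{T}$.
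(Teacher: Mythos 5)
Your proposal is correct and follows essentially the same route as the paper: the left-to-right inclusions via Corollary \ref{cor:logicNf} and Proposition \ref{prop:compl_Kframes}, and the converse by verifying the standard neighborhood-frame correspondents of the axioms on the base sets $U_k(\alpha)$. In fact your transitivity step is quantified more carefully than the paper's one-line justification, which asserts $U_k(y)\subseteq U_m(x)$ for \emph{any} $k$ whereas this only holds once $\max(k,\st(y))\ge\max(m,\st(x))$ --- exactly the condition you impose, and all that is needed since $\tau(y)$ is generated by the $U_k(y)$.
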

\begin{proof}
 In all these cases the inclusion from left to right is covered by Corollary \ref{cor:logicNf} and Proposition \ref{prop:compl_Kframes}.

Let us check the inclusion in converse direction.

(\ref{it1}). It is easy to check that n-frame $\X = (X, \tau) \models \logic{D}$ iff for each $x \in X$ $\varnothing \notin \tau(x)$. For $\Nf(F_{in})$ and $\Nf(F_{it})$ it obviously true.

(\ref{it2}). It is easy to check that n-frame $\X = (X, \tau) \models \logic{T}$ iff $x \in U \in \tau(x)$ for each $x$ and $U$. For $\Nf(F_{rn})$ and $\Nf(F_{rt})$ it is obviously true.

(\ref{it3}) and (\ref{it4}). It is well-known (see e.g.{} \cite{Hans03:Monotmodallogic}) that $\X = (X, \tau) \models \Box p \to \Box \Box p$ iff for each $U \in \tau(x)$ $\setdef[y]{ U \in \tau(y)} \in \tau (x)$. 
Indeed, it follows from the fact that for any $y \in U_m (x)$ and any $k$ $U_k(y) \subseteq U_m(x)$.
\end{proof}

Let $F_1 = (A^*, R_1) = F_{\xi_1\eta_1}[A]$ and $F_2 = (B^*, R_2) = F_{\xi_2\eta_2}[B]$ we assume that $A \cap B = \varnothing$, $A = \set{a_1, a_2, \ldots}$ and $B = \set{b_1, b_2, \ldots }$. Consider the product of n-frames $\X_1 = (X_1, \tau_1) = \Nf(F_1)$ and $\X_2 = (X_2, \tau_2) = \Nf(F_2)$
\[
\X = (X_1 \times X_2, \tau_1', \tau_2') = \Nf(F_1) \times_n \Nf(F_2).
\]

We define function $g: \X_1 \times \X_2 \to (A \cup B)^*$ as follows. For $(\alpha, \beta) \in \X_1 \times \X_2$, such that $\alpha = x_1 x_2 \ldots$ and $\beta = y_1 y_2 \ldots$, $x_i \in A \cup \set{0}$, $y_j \in B \cup \set{0}$, we define 
$g(\alpha, \beta)$ to be the finite sequence which we get after eliminating all zeros from the infinite sequence
$x_1y_1x_2y_2 \ldots$.

\begin{lemma}\label{lem:main}
 Function $g$ defined above is a bounded morphism: $g: \X \pmor \N(F_1 \otimes F_2)$.
\end{lemma}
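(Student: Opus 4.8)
The plan is to verify the three defining conditions of a bounded morphism (Definition~\ref{def:bounded_morphism}) for the map $g$, working with the filter bases $U_n$ on each factor and with the product neighborhood functions $\tau_1', \tau_2'$. The key observation driving everything is that $g$ interleaves and then ``forgets zeros'': if $(\alpha,\beta)$ has $\alpha = x_1x_2\ldots$ and $\beta = y_1y_2\ldots$, then $g(\alpha,\beta)$ is obtained from $x_1y_1x_2y_2\ldots$ by deleting zeros, so $g(\alpha,\beta)$ is a shuffle of $f_{F_1}(\alpha)\in A^*$ and $f_{F_2}(\beta)\in B^*$ that respects the interleaving order. I expect that $g(\alpha,\beta)$ can be described as $f_{F_1}(\alpha)$ and $f_{F_2}(\beta)$ merged according to which coordinate's letter comes first at each interleaved position; this explicit description is what I would nail down first, since the rest reduces to bookkeeping on it.

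First I would prove surjectivity: given any $\ve{w}\in(A\cup B)^*$, split it into its $A$-subword and $B$-subword and build $\alpha,\beta\in X$ by padding with $0^\omega$, arranged so that the interleaving reproduces $\ve{w}$; then $g(\alpha,\beta)=\ve{w}$. Next, for the forward condition~(2), I would take $U\in\tau_i'(\alpha,\beta)$ and show $g(U)\in\sigma_i(g(\alpha,\beta))$, where $\sigma_i$ is the neighborhood function of $\N(F_1\otimes F_2)$, i.e.\ $\sigma_i$ is determined by $R'_i$. The cleanest route is to relate a basic product-neighborhood, which by definition contains $V\times\set{\beta}$ for some $V\in\tau_1(\alpha)$ (and symmetrically for $i=2$), to the $R'_1$-successors of $g(\alpha,\beta)$. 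Here I would use that $V$ contains some base set $U_m(\alpha)$ and that $g$ sends $U_m(\alpha)\times\set{\beta}$ onto exactly the $R'_1(g(\alpha,\beta))$-successor set, analogously to how $f_F(U_m(x))=R(f(x))$ was used in Lemma~\ref{lem:bmorphism_wframe2nframe}. The converse condition~(3) is dual: given an $R'_i$-successor neighborhood $V'$ of $g(\alpha,\beta)$, I would produce $U=U_m(\alpha)\times\set{\beta}$ (for $i=1$) large enough that $g(U)\subseteq V'$.

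The main obstacle I anticipate is verifying that $g$ correctly matches the successor relations $R'_1$ and $R'_2$ of $F_1\otimes F_2$ through the interleaving. The relations $R'_i$ in Definition~\ref{def:F_fusion} append an $A^*$-block (resp.\ $B^*$-block) satisfying $\Lambda R_i \ve{z}$, and I must check that moving along $U_m(\alpha)$ in the first coordinate, keeping $\beta$ fixed, corresponds on the $g$-image precisely to appending such an $A$-block in the fusion frame, and that the fractal property (Proposition preceding) guarantees $f_{F_1}(\alpha)R_1 f_{F_1}(\alpha')$ translates into the right block-extension of $g(\alpha,\beta)$. Because the letters of $A$ and $B$ are disjoint and zeros are forgotten on both sides independently, changing only the first coordinate cannot disturb the $B$-letters' relative placement; making this independence precise — that the interleaving order is preserved and only the $A$-part grows — is the crux. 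Once this correspondence between $U_m(\alpha)\times\set{\beta}$ and the $R'_1$-successor set of $g(\alpha,\beta)$ is established (and symmetrically for the second coordinate), conditions~(2) and~(3) follow by the same filter-base argument as in Lemma~\ref{lem:bmorphism_wframe2nframe}.
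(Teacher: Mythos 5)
Your plan is correct and follows essentially the same route as the paper: surjectivity by splitting $\ve{w}$ into its $A$- and $B$-subwords padded with $0^\omega$, and conditions (2) and (3) both reduced to the single identity $g(U_m(\alpha)\times\set{\beta}) = R_1'(g(\alpha,\beta))$, exactly as in the paper's proof. The one detail your ``crux'' discussion should make explicit is that this identity requires $m > \max\set{st(\alpha), st(\beta)}$ (not merely $m \ge st(\alpha)$): if $m < st(\beta)$, a point of $U_m(\alpha)$ may acquire new $A$-letters at positions before $st(\beta)$, and these would be interleaved \emph{before} some $B$-letters of $\beta$, so the image would not be obtained by appending an $A$-block at the end of $g(\alpha,\beta)$.
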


\begin{proof} 
Let $\ve{z} = z_1 z_2 \ldots z_n \in (A \cup B)^*$. Define for $i\le n$
\[
x_i = \left\{
\begin{array}{l}
z_i, \hbox{ if $z_i \in A$};\\
0, \hbox{ if $z_i \notin A$};  
\end{array}
\right.\qquad
y_i = \left\{
\begin{array}{l}
z_i, \hbox{ if $z_i \in B$};\\
0, \hbox{ if $z_i \notin B$}.  
\end{array}
\right.
\]
Let $\alpha = x_1 x_2 \ldots x_n 0^\omega$ and $\beta = y_1 y_2 \ldots y_n 0^\omega$ then $g(\alpha, \beta) = \ve{z}$.
Hence $g$ is surjective.

The next two conditions we check only for $\tau_1$ and for $\tau_2$ it is similar.
Assume, that $(\alpha, \beta) \in X_1 \times X_2$ and $U \in \tau_1(\alpha, \beta)$. We need to prove that $R_1'(g(\alpha, \beta)) \subseteq g(U)$.
There is $m > \max\set{st(\alpha), st(\beta)}$ such that $U'_m(\alpha) \times \set{\beta} \subseteq U$ and since $g(U'_m(\alpha) \times \set{\beta}) = R_1'(g(\alpha, \beta))$ then
\[
R_1'(g(\alpha, \beta)) = g(U'_m(\alpha)\times \set{\beta}) \subseteq g(U);
\]
where $U'_m(\alpha)$ is the corresponding neighborhood from $\X_1$.

Assume that  $(\alpha, \beta) \in X_1 \times X_2$ and $R_1'(g(\alpha, \beta)) \subseteq V$. We need to prove that there exists $U \in \tau'_1(\alpha, \beta)$, such that $g(U) \subseteq V$. As $U$ we take $U'_m(\alpha)\times \set{\beta}$ for some $m  > \max\set{st(\alpha), st(\beta)}$, then
\[
g(U_m' (\alpha) \times \set{\beta}) = R_1'(g(\alpha, \beta)) \subseteq V.
\]
\end{proof}

\begin{corollary}\label{cor:logicNftimesNf}
 Let $F_1 = (A^*, R_1) = F_{\xi_1\eta_1}[A]$ and $F_2 = (B^*, R_2) = F_{\xi_2\eta_2}[B]$ then 
 $\Log(\Nf(F_1) \times_n \Nf(F_2)) \subseteq \Log(F_1) \otimes \Log(F_2)$.
\end{corollary}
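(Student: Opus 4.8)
The plan is to obtain the inclusion as a short chain of three steps, reusing exactly the pattern of Corollary \ref{cor:logicNf} but now for the two-sorted frame. All of the genuine combinatorial work has already been carried out in Lemma \ref{lem:main}, which supplies a surjective bounded morphism $g : \X \pmor \N(F_1 \otimes F_2)$ for $\X = \Nf(F_1) \times_n \Nf(F_2)$; from that point on the argument is purely formal.

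First I would invoke the corollary to Lemma \ref{lem:pmorhism4n-frame}: since $g$ is a surjective bounded morphism, every formula valid on $\X$ is valid on its image, so $\Log(\X) \subseteq \Log(\N(F_1 \otimes F_2))$. Next I would apply Lemma \ref{lem:n-frame_from_kframe} to the Kripke frame $F_1 \otimes F_2$, giving $\Log(\N(F_1 \otimes F_2)) = \Log(F_1 \otimes F_2)$. Finally, since $F_1 \otimes F_2$ is precisely the fusion frame of Definition \ref{def:F_fusion}, the Kracht--Wolter / Fine--Schurz Proposition yields $\Log(F_1 \otimes F_2) = \Log(F_1) \otimes \Log(F_2)$. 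Concatenating these,
\[
\Log(\Nf(F_1) \times_n \Nf(F_2)) = \Log(\X) \subseteq \Log(\N(F_1 \otimes F_2)) = \Log(F_1 \otimes F_2) = \Log(F_1) \otimes \Log(F_2),
\]
which is exactly the desired inclusion.

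There is no substantive obstacle here; the only points requiring care are bookkeeping. I would make sure the inclusion runs in the correct direction: a surjective bounded morphism transports validity from its domain onto its image, so it is $\Log(\X)$ that sits inside $\Log(\N(F_1 \otimes F_2))$ and not the reverse — this is the same orientation already used in the proof of Corollary \ref{cor:logicNf}. I would also double-check that the frame $F_1 \otimes F_2$ appearing in Lemma \ref{lem:main} is literally the construction of Definition \ref{def:F_fusion} (disjoint alphabets $A, B$ with relations $R'_1, R'_2$), so that the fusion Proposition applies verbatim. Once those two checks are made, the three cited results close the argument at once.
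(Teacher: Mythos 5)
Your proof is correct and follows essentially the same route as the paper, which simply states that the corollary is immediate from Lemma \ref{lem:main} and the bounded-morphism preservation lemma; you spell out the chain $\Log(\X) \subseteq \Log(\N(F_1 \otimes F_2)) = \Log(F_1 \otimes F_2) = \Log(F_1) \otimes \Log(F_2)$ explicitly and with the inclusions oriented correctly. If anything, your citation of Lemma \ref{lem:n-frame_from_kframe} and of the Kracht--Wolter/Fine--Schurz fusion proposition is more accurate than the paper's own reference to Proposition \ref{prop:fusin_of_n-frames}, which is not actually what the last equality rests on.
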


It immediately follows from Lemmas \ref{lem:main}, \ref{lem:pmorhism4n-frame} and Proposition \ref{prop:fusin_of_n-frames}.

\begin{corollary}\label{cor:main}
 Let $F_1, F_2 \in \set{F_{in}, F_{rn}, F_{it}, F_{rt}}$ then 
 $\Log(\Nf(F_1) \times_n \Nf(F_2)) = \Log(F_1) \otimes \Log(F_2)$.
\end{corollary}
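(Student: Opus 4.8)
The plan is to establish the two inclusions separately; most of the work is already contained in the preceding results. The inclusion $\Log(\Nf(F_1) \times_n \Nf(F_2)) \subseteq \Log(F_1) \otimes \Log(F_2)$ is precisely Corollary \ref{cor:logicNftimesNf}, which comes from the bounded morphism $g$ of Lemma \ref{lem:main}, so nothing new is needed for this direction.

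For the reverse inclusion $\Log(F_1) \otimes \Log(F_2) \subseteq \Log(\Nf(F_1) \times_n \Nf(F_2))$, I would chain three observations. Write $L_i = \Log(F_i)$. Since each $F_i$ is one of $F_{in}, F_{rn}, F_{it}, F_{rt}$, Proposition \ref{prop:logicsOf_omega_Nf_framas} (together with Proposition \ref{prop:compl_Kframes}) yields $\Log(\Nf(F_i)) = L_i$; in particular $\Nf(F_i) \models L_i$, i.e.\ $\Nf(F_i) \in nV(L_i)$. Next, Proposition \ref{prop:fusin_of_n-frames} gives $L_1 \otimes L_2 \subseteq L_1 \times_n L_2$. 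Finally, by the definition of $\times_n$ the logic $L_1 \times_n L_2$ is the logic of the class $\setdef[\X_1 \times \X_2]{\X_1 \in nV(L_1) \;\&\; \X_2 \in nV(L_2)}$, and the product n-2-frame $\Nf(F_1) \times_n \Nf(F_2)$ is a member of this class (take $\X_1 = \Nf(F_1)$ and $\X_2 = \Nf(F_2)$); since the logic of a class is contained in the logic of any one of its members, this gives $L_1 \times_n L_2 \subseteq \Log(\Nf(F_1) \times_n \Nf(F_2))$.

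Concatenating the three inclusions produces
\[
\Log(F_1) \otimes \Log(F_2) = L_1 \otimes L_2 \subseteq L_1 \times_n L_2 \subseteq \Log(\Nf(F_1) \times_n \Nf(F_2)),
\]
which together with Corollary \ref{cor:logicNftimesNf} completes the equality.

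I do not expect a substantive obstacle: the genuine content has already been discharged by the bounded morphism of Lemma \ref{lem:main} and the identification of the component logics in Proposition \ref{prop:logicsOf_omega_Nf_framas}. The only points demanding care are bookkeeping ones: invoking Proposition \ref{prop:logicsOf_omega_Nf_framas} to guarantee that each $\Nf(F_i)$ really validates $L_i$ (so that the product frame lies in the class defining $L_1 \times_n L_2$), and using the fact that the logic of a class of n-frames is contained in the logic of any single member of that class.
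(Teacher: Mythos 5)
Your proposal is correct and follows essentially the same route as the paper: left-to-right from Corollary \ref{cor:logicNftimesNf}, and right-to-left by combining Proposition \ref{prop:logicsOf_omega_Nf_framas} with Proposition \ref{prop:fusin_of_n-frames} and the fact that the product frame belongs to the class defining $\times_n$. You merely spell out the final containment step that the paper leaves implicit.
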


\begin{proof}
 The left-to-right inclusion follows from Corollary \ref{cor:logicNftimesNf}.

 To prove right-to-left inclusion we notice that due to Proposition \ref{prop:logicsOf_omega_Nf_framas} $\Log(\Nf(F_i)) = \Log(F_i)$ ($i=1,\,2$) and due to Proposition \ref{prop:fusin_of_n-frames}
\[
\Log(\Nf(F_1)) \otimes \Log(\Nf(F_2)) \subseteq \Log(\Nf(F_1)) \times_n \Log(\Nf(F_2)).
\]
\end{proof}

\section{Completeness results}

\begin{theorem}
Let  $\logic{L_1}, \logic{L_2} \in \set{\logic{S4}, \logic{D4}, \logic{D}, \logic{T}}$ then
\[
\logic{L_1} \times_n \logic{L_2} = \logic{L_1} \otimes \logic{L_2}.
\]
\end{theorem}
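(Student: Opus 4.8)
The plan is to prove the two inclusions separately and to note that the real work has already been done in the previous section. The inclusion $\logic{L_1} \otimes \logic{L_2} \subseteq \logic{L_1} \times_n \logic{L_2}$ is immediate from Proposition \ref{prop:fusin_of_n-frames}, which holds for arbitrary unimodal logics; so I only need the reverse inclusion $\logic{L_1} \times_n \logic{L_2} \subseteq \logic{L_1} \otimes \logic{L_2}$.

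For that inclusion I would first pick, for each of the two given logics, a witnessing fractal frame. By Proposition \ref{prop:compl_Kframes} each of $\logic{S4}, \logic{D4}, \logic{D}, \logic{T}$ is the logic of exactly one of the frames $F_{rt}, F_{it}, F_{in}, F_{rn}$, so I can choose $F_1, F_2 \in \set{F_{in}, F_{rn}, F_{it}, F_{rt}}$ with $\Log(F_1) = \logic{L_1}$ and $\Log(F_2) = \logic{L_2}$. By Proposition \ref{prop:logicsOf_omega_Nf_framas} the associated n-frames satisfy $\Log(\Nf(F_i)) = \logic{L_i}$; in particular each $\Nf(F_i)$ validates $\logic{L_i}$, i.e.\ $\Nf(F_i) \in nV(\logic{L_i})$.

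The key point is then that the single product n-frame $\Nf(F_1) \times \Nf(F_2)$ is a member of the class
\[
\setdef[\X_1 \times \X_2]{\X_1 \in nV(\logic{L_1}) \;\&\; \X_2 \in nV(\logic{L_2})}
\]
whose logic, by definition, is $\logic{L_1} \times_n \logic{L_2}$. Since the logic of a class of n-frames is contained in the logic of each single member of the class, I obtain $\logic{L_1} \times_n \logic{L_2} \subseteq \Log(\Nf(F_1) \times \Nf(F_2))$. Corollary \ref{cor:main} evaluates the right-hand side exactly: $\Log(\Nf(F_1) \times \Nf(F_2)) = \Log(F_1) \otimes \Log(F_2) = \logic{L_1} \otimes \logic{L_2}$. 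Combined with the first inclusion this gives the claimed equality, and since Corollary \ref{cor:main} is stated uniformly for all $F_1, F_2$ among the four frames, the argument covers every pair $\logic{L_1}, \logic{L_2}$ at once.

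The genuine mathematical content lies not in this assembly but one layer below, in Corollary \ref{cor:main} and the bounded morphism of Lemma \ref{lem:main}: the map $g$ that interleaves the two pseudo-infinite words and projects the product n-frame onto $\N(F_1 \otimes F_2)$, together with the verification that it carries the basic product neighborhoods $U'_m(\alpha) \times \set{\beta}$ exactly onto the successor sets $R_1'(g(\alpha, \beta))$. That back-and-forth check is the step I would expect to be delicate; once it is granted, the theorem is pure bookkeeping resting on the completeness facts of Propositions \ref{prop:compl_Kframes} and \ref{prop:logicsOf_omega_Nf_framas}.
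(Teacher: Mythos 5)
Your proposal is correct and takes essentially the same route as the paper: the paper's own proof also reduces the theorem to Corollary \ref{cor:main} by choosing $F_1, F_2 \in \set{F_{in}, F_{rn}, F_{it}, F_{rt}}$ with $\Log(F_i) = \logic{L_i}$ and writing $\logic{L_1} \times_n \logic{L_2} = \Log(\Nf(F_1) \times_n \Nf(F_2)) = \Log(F_1) \otimes \Log(F_2)$. Your version merely spells out the two inclusions (membership of the single product frame in the defining class, and Proposition \ref{prop:fusin_of_n-frames} for the converse) that the paper leaves implicit.
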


\begin{proof}
 Logics $\logic{L_1} = \Log(F_1)$ and $\logic{L_2}= \Log(F_2)$ for some $F_1, F_2 \in \set{F_{in}, F_{rn}, F_{it}, F_{rt}}$.
By Corollary \ref{cor:main} 
\[
\logic{L_1} \times_n \logic{L_2} = \Log(\Nf(F_1) \times_n \Nf(F_2)) =  \Log(F_1) \otimes \Log(F_2) = \logic{L_1} \otimes \logic{L_2}.
\]
\end{proof}

The following fact was proved in \cite{benthem:MultimodalLogicsProductsTopologies}.

\begin{corollary}\label{cor:S4timesS4}
 Let $\X = (\mathbb{Q}, \tau)$ where $\mathbb{Q}$ is the set of rational numbers and $\tau$ is based on the standard topology on $\mathbb{Q}$, i.e.\/ $\tau(x) = \setdef[U]{\exists V (\hbox{$x \in V$ is open and } V \subseteq U)}$. Then
\[
\Log(\X \times_n \X) = \logic{S4} \otimes \logic{S4}.
\]
\end{corollary}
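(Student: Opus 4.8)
The plan is to prove the two inclusions separately, obtaining one of them directly from our main theorem and reserving the genuine work for the other. For the inclusion $\logic{S4} \otimes \logic{S4} \subseteq \Log(\X \times_n \X)$, I would first note that $\X$ is a topological n-frame, so it validates $\logic{S4}$: using the criteria recorded in the proof of Proposition~\ref{prop:logicsOf_omega_Nf_framas}, reflexivity holds because $x \in U$ for every $U \in \tau(x)$, and the $\logic{4}$-axiom holds because the open interior $V$ of any $U \in \tau(x)$ satisfies $V \in \tau(x)$ while $U \in \tau(y)$ for all $y \in V$. Hence $\X \in nV(\logic{S4})$, so the single n-2-frame $\X \times_n \X$ lies in the class defining $\logic{S4} \times_n \logic{S4}$, and therefore $\logic{S4} \times_n \logic{S4} \subseteq \Log(\X \times_n \X)$. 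Since our main theorem gives $\logic{S4} \times_n \logic{S4} = \logic{S4} \otimes \logic{S4}$, this inclusion follows.

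For the converse inclusion $\Log(\X \times_n \X) \subseteq \logic{S4} \otimes \logic{S4}$, the plan is to exhibit a bounded morphism onto $\X \times_n \X$ from a frame whose logic is already known to equal $\logic{S4} \otimes \logic{S4}$. By Corollary~\ref{cor:main} we have $\Log(\Nf(F_{rt}) \times_n \Nf(F_{rt})) = \logic{S4} \otimes \logic{S4}$, so it suffices to produce a bounded morphism $\Nf(F_{rt}) \times_n \Nf(F_{rt}) \pmor \X \times_n \X$ and then apply the corollary of Lemma~\ref{lem:pmorhism4n-frame}. I would obtain this as the coordinatewise product $f \times f$ of a single bounded morphism $f : \Nf(F_{rt}) \pmor \X$ with itself. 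Two routine lemmas support this: that $f \times f$ is again a bounded morphism of the product n-2-frames (one transports the generating neighborhoods $V \times \{\beta\}$ and $\{\alpha\} \times V$ through each factor and checks the three conditions of Definition~\ref{def:bounded_morphism}), and that bounded morphisms compose. Using composition together with $f_{F_{rt}} : \Nf(F_{rt}) \pmor \N(F_{rt})$ from Lemma~\ref{lem:bmorphism_wframe2nframe}, it is enough to construct a bounded morphism $h : \N(F_{rt}) \pmor \X$ and set $f = h \circ f_{F_{rt}}$.

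The main obstacle is the construction of $h : \N(F_{rt}) \pmor \X$. Since $F_{rt}$ is a reflexive transitive tree, $\N(F_{rt})$ is exactly the Alexandrov space of the infinitely branching tree $(\omega^*, \subseteq)$, and a bounded morphism of n-frames onto the topological n-frame $\X$ is precisely a continuous, open, surjective map onto $\mathbb{Q}$, i.e.\ an interior map. Such a map is the classical witness to topological completeness of $\logic{S4}$ for $\mathbb{Q}$: one fixes a nested scheme of nonempty open rational intervals $I_{\ve a}$ indexed by the tree nodes $\ve a \in \omega^*$, with $I_\Lambda = \mathbb{Q}$, with the children $\{I_{\ve a \cdot n}\}_{n}$ having pairwise disjoint closures inside $I_{\ve a}$ and dense union, and with lengths shrinking along every branch; then $h(\ve a)$ is chosen in $I_{\ve a}$ so that cones map onto relatively open sets and every rational is eventually hit. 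Verifying that $h$ is interior and surjective is where the real content lies, and I would invoke the interior map of \cite{benthem:MultimodalLogicsProductsTopologies} for this step rather than rebuild it; everything else (validity of $\logic{S4}$ on $\X$, the product and composition lemmas, and the bookkeeping of the two inclusions) is routine.
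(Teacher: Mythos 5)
Your first inclusion ($\logic{S4}\otimes\logic{S4}\subseteq\Log(\X\times_n\X)$) is fine, and the reduction of the converse to finding a bounded morphism $\Nf(F_{rt})\times_n\Nf(F_{rt})\pmor\X\times_n\X$, built as a product $f\times f$ of a single morphism $f:\Nf(F_{rt})\pmor\X$, is also sound in principle. The fatal step is the factorization $f=h\circ f_{F_{rt}}$ through $\N(F_{rt})$: no surjective bounded morphism $h:\N(F_{rt})\pmor\X$ can exist. In $\N(F_{rt})$ every point $\ve{a}$ has a \emph{least} neighborhood, namely the cone $R(\ve{a})$, while $\mathbb{Q}$ is $T_1$. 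Condition 3 of Definition \ref{def:bounded_morphism} says that for every $V\in\tau(h(\ve{a}))$ there is $U\supseteq R(\ve{a})$ with $h(U)\subseteq V$; since then $h(R(\ve{a}))\subseteq V$ for \emph{every} neighborhood $V$ of $h(\ve{a})$, and the intersection of all neighborhoods of a rational is a singleton, $h$ must be constant on each cone, hence constant on the whole tree (the cone of $\Lambda$), contradicting surjectivity. Your appeal to ``the classical witness to topological completeness of $\logic{S4}$ for $\mathbb{Q}$'' points the arrow the wrong way: those interior maps go \emph{from} $\mathbb{Q}$ \emph{onto} finite Alexandrov spaces, never from an Alexandrov space onto a $T_1$ space.

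The paper avoids this trap by never passing through $\N(F_{rt})$ at this stage: it observes that $\Nf(F_{rt})$ itself (whose filters $\tau(\alpha)$ are generated by the strictly shrinking sets $U_k(\alpha)$ and have no least element) is already a topological n-frame whose topology is the order topology of the lexicographic order on its countable dense carrier, and then invokes Cantor's theorem to get an isomorphism $\Nf(F_{rt})\cong(\mathbb{Q},\tau)$. With that isomorphism, $\X\times_n\X\cong\Nf(F_{rt})\times_n\Nf(F_{rt})$ and Corollary \ref{cor:main} finishes both inclusions at once. If you want to salvage your architecture, you must construct $f:\Nf(F_{rt})\pmor\X$ directly (the isomorphism is the cheapest such $f$); the intermediate stop at the Alexandrov frame $\N(F_{rt})$ cannot be repaired.
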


\begin{proof}
 Let  $\Nf(F_{rt}) = (X, \tau)$. We can assume that $X = \setdef[\ve{x}\cdot 0^\omega]{\ve{x}\in \mathbb{Z}}$. 
Note that $X$ is a countable set and neighborhood function $\tau$ is based on topology generated by the lexicographical order $<_l$ on $X$. According to the classical result of Cantor, since the lexicographical order on $X$ is dense, $(X, <_l)$ isomorphic to $(\mathbb{Q}, <)$ (see ) and corresponding topological spaces are homeomorphic. Hence, 
\[
\Log(\X \times_n \X) = \Log( \Nf(F_{rt}) \times \Nf(F_{rt})) = \logic{S4} \otimes \logic{S4}. 
\]
\end{proof}

The following fact was announced\footnote{The talk at the conference was very detailed, but to my knowledge, the full proof has not been published yet.} in \cite{Urid11:ModalLogicBitopRatioPlane}.

\begin{corollary}\label{cor:D4timesD4}
 Let $\X = (\mathbb{Q}, \tau)$ where $\mathbb{Q}$ is the set of rational numbers and $\tau$ is based on the standard topology on $\mathbb{Q}$, i.e.\/ $\tau(x) = \setdef[U]{\exists V (\hbox{$x \in V$ is open and } V \setminus \set{x} \subseteq U)}$. Then
\[
\Log(\X \times_n \X) = \logic{D4} \otimes \logic{D4}.
\]
\end{corollary}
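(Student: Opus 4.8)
The plan is to run the same argument as for Corollary~\ref{cor:S4timesS4}, but built on the irreflexive--transitive tree $F_{it}$ rather than the reflexive--transitive $F_{rt}$. The point is that passing from a reflexive $R$ to an irreflexive one removes the base point from every basic neighborhood, and so turns the standard topology of the $\logic{S4}$ case into the \emph{derivational} (punctured) topology appearing in the statement.

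First I would take $\Nf(F_{it}) = (X, \tau)$, where $X$ is the set of eventually-zero sequences over $\omega \cup \set{0}$. The lexicographic order $<_l$ on $X$ depends only on the set of sequences, not on the relation $R$, so it is literally the same order as in the $\logic{S4}$ case; hence $(X, <_l)$ is a countable dense linear order without endpoints and, by Cantor's theorem, there is an order isomorphism $h\colon (X, <_l) \to (\mathbb{Q}, <)$. Fix one.

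Next I would identify the neighborhood filter. Writing $m = \max(k, \st(\alpha))$, for the reflexive--transitive frame one has $U^{rt}_k(\alpha) = \setdef[\beta]{\beta|_m = \alpha|_m}$, and Corollary~\ref{cor:S4timesS4} already tells us that, transported along $h$, these form a neighborhood base of the standard topology at $h(\alpha)$. For $F_{it}$ the relation is irreflexive, so $f_F(\alpha) \mathrel{R} f_F(\beta)$ forces $f_F(\beta)$ to be a \emph{proper} extension of $f_F(\alpha)$; unwinding this shows
\[
U_k(\alpha) = \setdef[\beta]{\beta|_m = \alpha|_m} \setminus \set{\alpha} = U^{rt}_k(\alpha) \setminus \set{\alpha},
\]
since the only eventually-zero sequence with prefix $\alpha|_m$ and all-zero tail is $\alpha$ itself. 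Thus each basic neighborhood of $\Nf(F_{it})$ is a punctured standard neighborhood, and so $\tau$ transports along $h$ precisely to the derivational neighborhood function $\tau(x) = \setdef[U]{\exists V(x \in V \hbox{ is open and } V \setminus \set{x} \subseteq U)}$. Consequently $h$ is an isomorphism of n-frames between $\Nf(F_{it})$ and the frame $\X$ of the statement.

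Finally, the isomorphism gives $\Log(\X \times_n \X) = \Log(\Nf(F_{it}) \times_n \Nf(F_{it}))$, and Corollary~\ref{cor:main} with $F_1 = F_2 = F_{it}$, together with Proposition~\ref{prop:compl_Kframes}, yields
\[
\Log(\Nf(F_{it}) \times_n \Nf(F_{it})) = \Log(F_{it}) \otimes \Log(F_{it}) = \logic{D4} \otimes \logic{D4}.
\]
I expect the only real work to be the identity $U_k(\alpha) = U^{rt}_k(\alpha) \setminus \set{\alpha}$ and the verification that deleting the single base point indeed matches the derivational ``$V \setminus \set{x}$'' prescription rather than removing a larger set; this reflexive-versus-irreflexive bookkeeping is exactly what separates the derivational topology from the standard one, and it is the step that must be pinned down carefully.
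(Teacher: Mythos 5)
Your proposal is correct and follows essentially the same route as the paper: the paper's own (very terse) proof also identifies $\Nf(F_{it})$ with the derivational neighborhood frame on $\mathbb{Q}$ via the Cantor order isomorphism and then invokes Corollary \ref{cor:main}. Your explicit verification that $U_k(\alpha) = U^{rt}_k(\alpha)\setminus\set{\alpha}$ (because irreflexivity of $R$ excludes exactly the one sequence $\alpha|_m\cdot 0^\omega = \alpha$) is precisely the detail the paper leaves implicit in the phrase ``$\tau$ is based on derivation operator in $\mathbb{Q}$''.
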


\begin{proof}
 Let $\X = (\mathbb{Q}, \tau)$, where $\tau$ is based on derivation operator in $\mathbb{Q}$. Since $(X, <_l)$ isomorphic to $(\mathbb{Q}, <)$  then
\[
\Log(\X \times_n \X) = \Log( \Nf(F_{it}) \times \Nf(F_{it})) = \logic{D4} \otimes \logic{D4}. 
\]
\end{proof}

\section{Conclusion}

There are several ways to continue research. One of them is to try and extend the technique to other logics (e.g.~$\logic{K}$). 
The other way is to add the third modality which corresponds to the following neighborhood function $\tau' (x, y) = \setdef[U]{\exists V_1 \in \tau_1 (x) \,\&\, \exists V_2 \in \tau_2(y) \left( V_1 \times V_2 \subseteq U \right)}$. Similar construction was considered in \cite{benthem:MultimodalLogicsProductsTopologies} for the topological semantic.


\bibliographystyle{plain}
\bibliography{aiml12}

\end{document}